\newcounter{theorem}
\newtheorem{cor}[theorem]{Corollary}
\newtheorem{lem}[theorem]{Lemma}
\newtheorem{thm}{Theorem}
\newtheorem{pro}[theorem]{Proposition}
\newtheorem{dfn}[theorem]{Definition}
\newtheorem{exa}[theorem]{Example}
\newtheorem{rem}[theorem]{Remark}
\newcounter{intro}
\newcommand{\cref}[1]{Corollary~\ref{#1}}
\newcommand{\eref}[1]{(\ref{#1})}
\newcommand{\lref}[1]{Lemma~\ref{#1}}
\newcommand{\pref}[1]{Proposition~\ref{#1}}
\newcommand{\ccup}{\cup\hspace{-6pt}{^\uparrow}}
\newcommand{\tir}{\discretionary{.}{}{---\kern.7em}}
\newcommand\I{\mathbb{I}}
\newcommand\C{\mathbb C}
\newcommand\N{\mathbb{N}}
\newcommand\R{\mathbb R}
\newcommand\Z{\mathbb{Z}}
\newcommand\mcV{\mathcal V}  \newcommand\mcE{\mathcal E}
  \newcommand\mcH{\mathcal H}
 \newcommand\mcZ{\mathcal Z}
\newcommand{\di}{\mathrm d} %
\renewcommand{\phi}{\varphi}   % shortcut
\newcommand{\wt}{\widetilde}           % shortcut
\newcommand{\wbar}{\overline}           % shortcut
\newcommand{\Dom}{\operatorname{Dom}}
\renewcommand{\Im}{\operatorname{Im}}
\newcommand{\Ker}{\operatorname{Ker}}
\renewcommand{\Re}{\operatorname{Re}}
\newcommand{\supp}{\operatorname{supp}}
\begin{document}

\title[Gau\ss-Bonnet operator on graphs]
{The Gau\ss-Bonnet operator of an infinite graph}

\author{Colette Ann\'e}
\address{Laboratoire de Math\'ematiques Jean Leray, Universit\'e de Nantes,
 CNRS, Facult\'e des Sciences, BP 92208, 44322 Nantes, France}
\email{colette.anne@univ-nantes.fr}

\author{Nabila Torki-Hamza}
\address{ISIG-K, Universit\'e de Kairouan, 3100-Kairouan; Tunisie}
\email{nabila.torki-hamza@fsb.rnu.tn; natorki@gmail.com }

\date{\today \;
\emph{File: }\texttt{AT-cor.tex}
\\{$2010$ {\it Mathematics Subject Classification.} 
 39A12, 05C63, 47B25, 05C12, 05C50. \\
 {\it Key Words and Phrases.} infinite graph, $ \chi-$completeness, 
 difference operator, coboundary operator, Dirac type operator, 
 Gau\ss-Bonnet operator, essential self-adjointness.} 
}

%%%%%%%%%%%%%%%%%%%%%%%%%%%%%%%%%
\begin{abstract}
\tir We propose a general condition, to ensure  essential self-adjointness
for the Gau\ss-Bonnet operator $D=d+\delta$, based on a notion
of completeness as Chernoff.  
This gives essential self-adjointness 
of the Laplace operator both for functions and 1-forms on infinite graphs.
This is used to extend Flanders result concerning solutions of Kirchhoff's 
laws. 

 \hspace{-0.5cm}
 {\sc R\'esum\'e.} Nous proposons une condition g\'en\'erale qui assure le 
caract\`ere essentiellement auto-adjoint de l'op\'erateur de Gauss-Bonnet $D=d+\delta$, bas\'ee 
sur une notion de compl\'etude comme Chernoff. 
Comme cons\'equence, 
l'op\'erateur de Laplace agissant sur les fonctions et les 1-formes de graphes 
infinis est essentiellement auto-adjoint. Nous utilisons ce cadre pour \'etendre le r\'esultat de Flanders \`a propos des solutions des lois de Kirchhoff.

\end{abstract}

\maketitle
%%%%%%%%%%%%%%%%%%%%

%%  contents 
% \tableofcontents

%%%%%%%%%%%%%%%%%%%%

\section{Introduction}
 Operators on infinite graphs are of large interest
and a lot of recent works deals with this subject. One approach can be to 
study how techniques of spectral geometry can be extended on graphs regarded 
as one-dimensional simplicial complexes. We refer to Dodziuk \cite{D84,DK} for
 general presentation of this approach and to \cite{CdV,CTT} for the 
geometric point of view, and also \cite{CdV0} for the relation between
Kirchhoff's laws and Hodge theory. 

We consider here only connected locally finite infinite graphs and we study 
Kirchhoff's laws.
Flanders has first studied this question on infinite graphs seen as infinite 
electric networks, see \cite{F}. 
Several authors have  clarified and extended Flanders work on electric 
networks, see for instance Thomassen \cite{T90}, Soardi \cite{S},  
Doyle \& Snell \cite{D}, Zemanian 
\cite{Z}, Georgakopoulos \cite{G}, Carmesin \cite{Cm} and also the book of
Jorgensen \& Pearse \cite{JP} for a general approach.\\
Flanders main result is that there exists a unique current flow in an infinite 
network with a finite number of sources which is the limit of flows with finite support.

In our paper, this question is approached by the study of a Dirac type 
operator: {\it the Gau\ss-Bonnet operator} $D=d+\delta$, introduced on an 
infinite graph considered as a one-dimentional simplicial complex. 
Indeed, this operator is a generalisation of the Dirac operator studied on 
$\Z$ by Golenia \& Haugomat in \cite{GH}.
We give  a general condition on the graph by defining the notion of  
\emph{$\chi-$completeness}, see Section \ref{sa}. 
One of the main results  is essential self-adjointeness of the 
Gau\ss-Bonnet operator, when the graph is $\chi-$complete 
(or {\it complete homogeneous}). This condition covers the situations of  \cite{M}, 
and \cite{T} (or \cite{T12}), it is satisfied by locally finite graphs
which are complete for some intrinsic pseudo metric, as defined in \cite{HKMW}
(although the results of \cite{HKMW} are valid in a more general context of graphs not 
necessarily locally finite), 
and it is a discrete version of a result of Chernoff, see  \cite{Ch}, in the case of 
manifolds. 
One of the applications in his paper concludes that, on a complete manifold, 
every power of the Dirac operator $d+\delta$ is essentially self-adjoint. 
In particular, for every power of  the Laplace-Beltrami operator, essential 
self-adjointness is true.

In Section \ref{anghel}, we define the property of {\it positivity at infinity} 
for Dirac type operators. And by adding this assumption on our Gau\ss-Bonnet 
operator, we prove that its range  is closed and consequently the Hodge 
property holds, in a similar result as Anghel's for compact Riemannian 
manifold, see \cite{A}. 
This situation  permits us to enlarge the conditions on the current source
and the voltage source in the Flanders problem. In Section \ref{examples}, 
we give new examples of infinite graphs where it applies.

%%%%%%%%%%%%
\section{Preliminaries} 
\subsection{Definitions on Graphs}(cf. \cite{LP})
 A graph $K$ is a simplicial complex of dimension one. 
We denote by $\mcV$ the set of vertices and $\mcE$ the set of 
{\it oriented edges}, considered as a 
subset of $\mcV\times\mcV$.
We assume that $\mcE$ is symmetric without loops:
$$v\in\mcV\Rightarrow (v,v)\notin \mcE,\quad (v_1,v_2)\in \mcE\Rightarrow
(v_2,v_1)\in \mcE.$$
Choosing an orientation of the graph consists of defining a partition of 
$\mcE: $ 
\begin{align*}
&\mcE^+\sqcup\mcE^-=~\mcE \\
&(v_1,v_2)\in \mcE^+\Longleftrightarrow(v_2,v_1)\in \mcE^-. 
\end{align*} 
For $e=(v_1,v_2)\in\mcE,$ let's set 
$$e^+=v_2,\, e^-=v_1,\, -e=(v_2,v_1).
$$
$e^+$ and $ e^-$ are called boundary points of the edge $e$.
\subsubsection{} 
A {\it path} between two vertices $x,y$ in $\mcV$ is a 
finite set of edges
$e_1,\dots,e_n, n\geq 1$ such that 
$$
e_1^-=x,\, e_n^+=y~ \hbox{ and, if }~ n\geq 2, ~\forall j,~~1\leq j\leq (n-1) 
\Rightarrow e_j^+=e_{j+1}^-.
$$
Notice that each path has a beginning and an end, and that an edge is a path.\\ 
Let us denote $\Gamma_{xy}$ the set of the
paths from the vertex $x$ to the vertex $y.$

\subsubsection{} 
The graph is {\it connected} if two vertices are always 
related by a path, {\it ie.} if $\Gamma_{xy}$ is non empty for all $x,y$ in $\mcV .$
\subsubsection{} 
The graph is {\it locally finite} if each vertex belongs 
to a finite number of edges. The {\it degree} or {\it valence} of a vertex $x\in\mcV$
is the cardinal of the set $\{e\in\mcE;\,e^+=x\}.$ 
\subsubsection{}\label{subgraph}
A {\it subgraph} of a graph $K$ is a graph $K_0=(\mcV_0,\mcE_0)$ such that
$\mcV_0\subset\mcV$ and $\mcE_0\subset \mcE.$

\begin{rem} 
All the graphs we shall consider on the sequel will be connected, 
locally finite, so with countably many vertices.
\end{rem}
%%%%%%%%%%%%%%%%%%%%%%%%%%%%%%%%%%%%%%%%%%
\subsection{Functions and forms}
The $0-$cochains are just scalar functions on $\mcV$, we denote the set by $C^0(K).$\\
The $1-$cochains or forms are odd scalar functions on $\mcE$ we denote the set by $C^1(K).$\\
Thus we have
\begin{align*}
C^0(K)= ~&\C^\mcV,\\
 C^1(K)= ~&\{\phi:\mcE\to\C,\phi(-e)=-\phi(e)\}.
\end{align*}
The sets of cochains with finite support are denoted by  $C_0^0(K),~C_0^1(K)$.
\\
To obtain Hilbert spaces we need weights, let us give
$$c:\mcV\to \R_+^{\ast},~$$ 
and  $$r:\mcE\to \R_+^{\ast} ~ \hbox{even}
$$
so $r(-e)=r(e)$. \\
They define scalar products: 

\begin{align*}
\forall ~f,g\in C_0^0(K)\, ;~ &\langle f,g\rangle =\sum_{v\in\mcV}c(v)f(v)\bar g(v)
\end{align*}
\begin{align}\label{e-scalar}
\forall~ \phi,\psi\in C_0^1(K)\, ; ~ &\langle \phi,\psi\rangle=\frac 12\sum_{e\in\mcE}r(e)
\phi(e)\bar\psi(e) 
\end{align}
\begin{rem}
As the products  $r(e)\phi(e)\bar\psi(e), ~ e\in \mcE$  in \eref{e-scalar} are even, the term $\dfrac 12$ allows 
to recover the usual definition.
\end{rem}
\begin{rem}
In the context of electric networks, our weight on edges would play the role
of the {\it conductance}, the intensity would be on $e\in\mcE:I(e)=r(e)\phi(e)$
and the energy $\|\phi\|^2=\frac 12\sum_{e\in\mcE}\frac{1}{r(e)}I(e)^2.$ 
So, indeed, $\frac{1}{r(e)}$ is the resistance of the edge $e$!
\end{rem}

Let us finally define the Hilbert spaces $L_2(\mcV)$ and $ L_2(\mcE)$ as the sets of cochains with finite norm, 
we have
\begin{align*}
L_2(\mcV)=\overline{C_0^0(K)},\\
 L_2(\mcE)=\overline{C_0^1(K)}.
\end{align*}

and put
\begin{equation}\label{H}
\mcH=L_2(\mcV)\oplus L_2(\mcE), \forall F=(f,\phi)\in\mcH,\, \|F\|^2=\|f\|^2+\|\phi\|^2.
\end{equation}
{\it Comment.} $L_2(\mcV)$ and $ L_2(\mcE)$ can be considered as subspaces of $\mcH,$ this justifies that
all the $L_2$-norms have the same notation. 
%%%%%%%%%%%%%%%%%%%%%%%%%%%%%%%%%%%%
\subsection{Operators}
\subsubsection{The difference operator} It is the operator
$$\di:C_0^0(K)\to C_0^1(K),$$ given by
\begin{equation}\label{di} 
 \di(f)(e)=f(e^+)-f(e^-),
\end{equation}
for $f\in C_0^0(K), e\in \mcE. $
\subsubsection{The coboundary operator}
 It is $\delta $ the formal adjoint of $\di .$ \\
Thus it satisfies 
\begin{equation} \label{delta}
\langle \di f,\phi\rangle=\langle f,\delta\phi\rangle
\end{equation} 
for all $f\in C_0^0(K)$ and $\phi\in C_0^1(K).$
\begin{lem}
The coboundary operator $\delta:C_0^1(K)\to C_0^0(K),\;$ acts as
\begin{equation}\label{del}  
\delta(\phi)(x)=\frac{1}{c(x)}
\sum_{e,e^+=x}r(e)\phi(e).
\end{equation}
\end{lem}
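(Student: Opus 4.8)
The plan is to start from the defining relation \eref{delta}, to compute the inner product $<\di f,\phi>$ explicitly using the definitions \eref{e-scalar} and \eref{di}, and then to rearrange the resulting expression into a sum over vertices so that the formal adjoint can simply be read off. Since $f\in C_0^0(K)$ and $\phi\in C_0^1(K)$ have finite support and the graph is locally finite, every sum in sight is finite, so no question of convergence or of interchanging summations arises.

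First I would expand, using \eref{di},
$$<\di f,\phi>=\frac 12\sum_{e\in\mcE}r(e)\bigl(f(e^+)-f(e^-)\bigr)\phi(e),$$
and split this into the two sums coming from the terms $f(e^+)$ and $f(e^-)$. The key manipulation is to reindex the second sum by the substitution $e\mapsto -e$. Since $\mcE$ is symmetric, the index still runs over all of $\mcE$; moreover $r$ is even, $\phi$ is odd, and $(-e)^-=e^+$, so the second sum becomes $-\sum_{e\in\mcE}r(e)f(e^+)\phi(e)$, that is, the negative of the first sum. The two halves therefore reinforce each other and the factor $\tfrac 12$ cancels, leaving
$$<\di f,\phi>=\sum_{e\in\mcE}r(e)f(e^+)\phi(e).$$

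Next I would regroup this sum over edges as a sum over vertices by collecting, for each $x\in\mcV$, the oriented edges $e$ with $e^+=x$:
$$<\di f,\phi>=\sum_{x\in\mcV}f(x)\sum_{e,\,e^+=x}r(e)\phi(e).$$
Comparing with $<f,\delta\phi>=\sum_{x\in\mcV}c(x)f(x)(\delta\phi)(x)$ and using that $f$ is an arbitrary finitely supported function (so one may test against the indicator of a single vertex), the coefficients of $f(x)$ must coincide for every $x$. This gives $c(x)(\delta\phi)(x)=\sum_{e,\,e^+=x}r(e)\phi(e)$, and dividing by $c(x)>0$ yields the stated formula \eref{del}.

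The computation is essentially routine; the single point requiring care is the reindexing $e\mapsto -e$ in the second sum, where it is the oddness of $\phi$ combined with the evenness of $r$ that makes the two halves add rather than cancel, and where one must use $(-e)^-=e^+$. I would also remark that local finiteness guarantees that each inner sum $\sum_{e,\,e^+=x}r(e)\phi(e)$ is finite and that $\delta\phi$ is again finitely supported, so that $\delta$ indeed maps $C_0^1(K)$ into $C_0^0(K)$ as claimed, while the positivity of $c$ makes the final division legitimate.
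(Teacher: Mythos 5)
Your proof is correct and follows essentially the same route as the paper: both expand $<\di f,\phi>$, use the symmetry of $\mcE$ together with the oddness of $r\phi$ to merge the $f(e^+)$ and $f(e^-)$ contributions (cancelling the factor $\tfrac 12$), regroup over vertices, and read off $\delta$; the only cosmetic difference is that you reindex by $e\mapsto -e$ before regrouping, while the paper regroups first and then applies the oddness identity at each vertex. Your closing remarks on local finiteness and finite support match the paper's concluding observation.
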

\begin{proof} \tir
Using the equation \eref{delta}, we have
$$\frac 12\sum_{e \in\mcE } r(e)\left(f(e^+)-f(e^-)\right)\bar\phi(e)=\frac 12\sum_{x\in\mcV} f(x)
\wbar{\Big(\sum_{e^+=x}r(e)\phi(e)-\sum_{e^-=x}r(e)\phi(e)\Big)}
$$
But $r\phi$ is odd and $\mcE$ symmetric, so 
$$\sum_{e^-=x}r(e)\phi(e)=-\sum_{e^+=x}r(e)\phi(e).
$$
We remark that the sum entering in the formula \eref{del} of $\delta$ is finite
due to the hypothesis that the graph is locally finite.
\end{proof}
%\cqfd
\begin{rem}
The operator $\di$ is defined by \eref{di} on $C^0(K),$ but to define
$\delta$ on $C^1(K),$ we need an hypothesis on $K$: we suppose that
the graph is locally finite. This hypothesis could be weakened by assuming that 
the edge weights $r(e),~~ e \in \mcE$ are summable around each vertex as considered in \cite{KLe}.
\end{rem}
With these two operators we can define the following two operators.
\subsubsection{The Gau\ss-Bonnet operator}
 It is the endomorphism 
$$D=\di+\delta:C_0^0(K)\oplus C_0^1(K)\to C_0^0(K)\oplus C_0^1(K)
$$
given by
$$ D(f, \varphi)= (\delta \varphi , \di f)
$$
for all $f \in C_0^0(K)$ and $\varphi \in  C_0^1(K).$\\
\begin{rem}On a locally finite graph, the operator $D$ extends to $C^0(K)\oplus C^1(K)$ and we still denote
it $D$ if there is no confusion.
\end{rem}
The domain $C_0^0(K)\oplus C_0^1(K)$ of $D$ is dense in the Hilbert space $\mcH$ (defined in \eref{H}). 
This operator is symmetric and of Dirac type, {\it i.e.} $D^2$ is of Laplace type.
\subsubsection{Laplacian} 
By definition, it is $$\Delta=D^2 :C_0^0(K)\oplus C_0^1(K)\circlearrowleft.$$
This operator preserves the direct sum $C_0^0(K)\oplus C_0^1(K),$
so we can write 
$$\Delta=\Delta_0\oplus \Delta_1.
$$ 

\subsection{Metrics}\label{met}
By analogy to Riemannian geometry, we call {\it metric} an even function 
$$a:\mcE\to\R_+^{\ast}.
$$ 
It defines a distance on the graph $K$ in the following way. \\
One first defines the {\it length of a path}:
for $\gamma=(e_1,\dots,e_n)$ 
$$l_a(\gamma)=\sum_{j=1}^n \sqrt{a(e_j)}.
$$
Then the {\it metric distance} between two vertices $x,y$ is given by
$$d_a(x,y)=\inf_{\gamma\in\Gamma_{xy}}l_a(\gamma).
$$

%%%%%%%%%%%%%%%%%%%%%%%%%%%%%%%%%%%
\section{Closability and Self-adjointness}
\subsection{Closability}
\begin{lem} 
If the graph $K$ is connected and locally finite the operators 
$\di$ and $\delta$ are closable.
\end{lem}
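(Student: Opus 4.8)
The plan is to appeal to the classical criterion that a densely defined operator between Hilbert spaces is closable precisely when its adjoint is densely defined. Both $\di$ and $\delta$ are defined on the finitely supported cochains $C_0^0(K)$ and $C_0^1(K)$, which are dense in $L_2(\mcV)$ and $L_2(\mcE)$ by construction. So the task reduces to producing a dense domain for each of the adjoints $\di^*$ and $\delta^*$.

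The role of local finiteness is to guarantee that both operators actually carry finitely supported cochains to finitely supported cochains. For $f\in C_0^0(K)$, the form $\di(f)$ of \eref{di} vanishes on every edge not incident to $\supp f$, and local finiteness makes the set of such incident edges finite, so $\di(f)\in C_0^1(K)$. Symmetrically, for $\phi\in C_0^1(K)$ the function $\delta(\phi)$ of \eref{del} is supported on the endpoints of edges in $\supp\phi$, a finite set, and moreover each defining sum $\sum_{e^+=x}r(e)\phi(e)$ is finite for the same reason; hence $\delta(\phi)\in C_0^0(K)$. This last point is exactly why local finiteness was needed even to define $\delta$.

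The key step is then to read the formal adjoint relation \eref{delta}, $<\di f,\phi>=<f,\delta\phi>$ valid for all $f\in C_0^0(K)$ and $\phi\in C_0^1(K)$, as a statement about the genuine Hilbert-space adjoints. Fixing $\phi\in C_0^1(K)$, the identity exhibits the vector $\delta\phi\in L_2(\mcV)$ representing the functional $f\mapsto <\di f,\phi>$, so $\phi\in\dom(\di^*)$ with $\di^*\phi=\delta\phi$; that is, $\delta\subseteq\di^*$. As $\dom(\di^*)$ then contains the dense subspace $C_0^1(K)$, the operator $\di$ is closable. Reading \eref{delta} the other way gives $\di\subseteq\delta^*$, so $\dom(\delta^*)\supseteq C_0^0(K)$ is dense and $\delta$ is closable as well. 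I expect no genuine obstacle here: once \eref{del} is seen to make sense — the single place where the hypotheses are used, and only local finiteness at that — closability is the formal consequence of $\di$ and $\delta$ being densely defined mutual formal adjoints. Equivalently, one may observe that the combined operator $D=\di+\delta$ is densely defined and symmetric, hence automatically closable.
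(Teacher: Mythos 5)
Your proof is correct, but it follows a genuinely different route from the paper's. The paper verifies closability directly from the definition: if $\|f_n\|\to 0$ then (since the weights $c$ and $r$ are strictly positive) $f_n$ converges pointwise to $0$, hence $\di(f_n)$ converges pointwise to $0$, so any $L_2$-limit of $\di(f_n)$ must vanish; the same pointwise argument handles $\delta$, with local finiteness ensuring that the finite sums defining $\delta(\phi_n)(x)$ pass to the limit. You instead invoke the abstract criterion that a densely defined operator is closable if and only if its adjoint is densely defined, and you get density of $\dom(\di^\ast)$ and $\dom(\delta^\ast)$ from the formal adjoint relation \eref{delta}, which the paper has already established (together with the fact, correctly justified by local finiteness, that $\di$ and $\delta$ map finitely supported cochains to finitely supported cochains). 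Both arguments are complete. Yours is shorter once \eref{delta} is in hand and makes transparent that closability is a purely formal consequence of $\di$ and $\delta$ being densely defined mutual formal adjoints --- a fact the paper exploits later anyway when it identifies $\di_{max}=\delta^\ast$ and $\delta_{max}=\di^\ast$. The paper's argument is more elementary and self-contained (it uses no adjoint machinery, only that norm convergence implies pointwise convergence), and it isolates exactly where local finiteness enters for $\delta$. Your closing observation that closability also follows from the symmetry of $D=\di+\delta$ is likewise valid: since $D$ maps the summand $C_0^0(K)$ into $C_0^1(K)$ and vice versa, closability of the symmetric operator $D$ does descend to each of $\di$ and $\delta$ separately.
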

\begin{proof}\tir 
Let us suppose that there exists a sequence
$(f_n)_{n\in\N}$ in $C_0^0(K)$ such that $\|f_n\|\to 0$ and $(\di(f_n))_n$ converges.
Let us denote by $\phi$ this limit.\\
We have to show that $\phi=0.$ 
If $$\|f_n\|+\|\di(f_n)-\phi\|\to 0,
$$ 
then for each vertex $v$, $f_n(v)$ converges to $0$ and 
for each edge $e$, $\di(f_n)(e)$ 
converges to $\phi(e).$ But by the first statement and the expression of $\di,$
for each edge $e,\,\di(f_n)(e)$ converges to $0$.

The same can be done for $\delta$: convergence in norm to $0$ of a sequence 
$(\phi_n)_n$ implies pointwise convergence to $0$ which implies pointwise convergence 
of $\delta(\phi_n)$ to $0$, because of local finiteness of the graph ; if
$\delta(\phi_n)$ converges in norm, it must be to $0$.
\end{proof}
\vspace{0.5cm}
Thus, we can consider different extensions of these operators in the 
framework of Hilbert spaces (see \cite{RS}).\\

The smallest extension is the closure, denoted $\bar d=\di_{min}$ (resp. $\bar\delta=\delta_{min}$
and $\bar D=D_{min}$) has the domain
\begin{multline}
\Dom(\di_{min})=\{f\in L_2(\mcV); \,\exists (f_n)_{n\in\N},\, f_n\in C_0^0(K),\,
L_2\!-\lim_{n\to\infty}f_n=f,\\
L_2\!-\lim_{n\to\infty}\di(f_n) \hbox{ exists }\}
\end{multline}
for such an $f$, one puts 
$$\di_{min}(f)=\lim_{n\to\infty}\di(f_n).
$$

The largest is $\di_{max}=\delta^\ast,$ the adjoint operator of $\delta_{min},$ (resp.  
 $\delta_{max}=\di^\ast,$ the adjoint operator of $\di_{min}.)$
 
%%%%%%%%%%%%%%%%%%%%

\subsection{A sufficient condition for self-adjointness of $D$}\label{sa}
\subsubsection{Geometric hypothesis for the graph $K$} 
\begin{dfn}
The graph $K$ is {\em $\chi-$complete}
if there exists a increasing sequence of finite sets 
$(B_n)_{n\in \N}$ such that
$\mcV=\ccup\, B_n$ and there exist related functions $\chi_n$ satisfying the
following three conditions:
\begin{enumerate}
\item[  (i)] $\chi_n\in C_0^0(K),\, 0\leq\chi_n\leq 1$
\item[ (ii)] $v\in B_n ~\Rightarrow ~\chi_n(v)=1$
\item[(iii)] $\displaystyle\exists C>0, \forall n\in \N,\, x\in\mcV,\,
\frac{1}{c(x)}\sum_{e,e^\pm=x}r(e)\di\chi_n(e)^2\leq C.$
\end{enumerate}
\end{dfn}

For this type of graphs one has
\begin{align}
\label{E0}\forall p\in \N,\,\exists n_p,\, n\geq n_p &\Rightarrow ~
\forall e\in\mcE,\hbox{ such that } e^+\hbox{ or }e^-\in B_p, \;\di\chi_n(e)=0
\\
\label{E}\mcE=\ccup\, \mcE_n &\hbox{ if } 
\mcE_n=\{e\in\mcE, e^+\in B_n\hbox{ or } e^-\in B_n \} 
\\
\forall f\in L_2(\mcV),\,&\lim_{n\to\infty}<\chi_n f,f>=\|f\|^2
\\
\label{E2}\forall \phi\in L_2(\mcE),\,& \|\phi\|^2=
\lim_{n\to\infty}\frac 12\sum_{e\in\mcE}r(e)\chi_n(e^+)|\phi(e)|^2
\\
\hbox {and} &\lim_{n\to\infty}\sum_{e\in\supp(\di\chi_n)}r(e)|\phi(e)|^2=0.
\end{align}

\begin{exa}\rm Let us consider an infinite tree with increasing valence:
\begin{figure}[ht]
\includegraphics*[height=6cm,width=12cm]{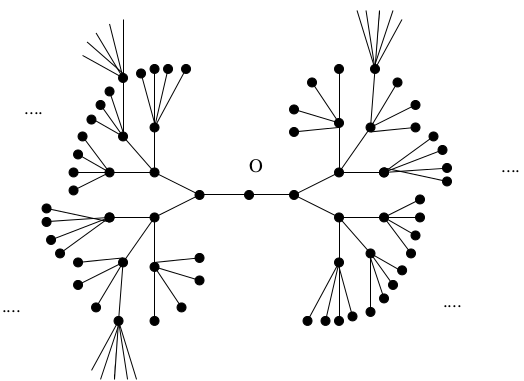}
\end{figure}

Taking constant weights on vertices and edges, this graph is $\chi-$complete.
Indeed, one can define {\it generations} of vertices on such a graph:
the considered origin vertex $O$ is of generation $0$ and valence $2$, it is
related to two vertices which are of generation $1$ and valence $3$, and more 
generally
there are $2n!$ vertices of generation $n$ and valence $(n+2).$

One defines $B_n,n\in \N$,  as the set of vertices of generation
less than $n^2$ and $\chi_n$ constant on each generation of vertices:
$$ \hbox{$x$ of generation $p$}\Rightarrow 
\chi_n(x)=\Big(\frac{(n+1)^2-p}{2n+1}\wedge 1\Big)\vee 0.
$$
So, $p\leq n^2\Rightarrow \chi_n(x)=1$ and 
$p\geq (n+1)^2\Rightarrow \chi_n(x)=0$ while $|d\chi_n(e)|\leq 1/(2n+1)$
is in fact supported on edges between generations larger than $n^2$
and less than $(n+1)^2.$ To verify the condition (iii), one has
to calculate for these generations, $(p+2)/(2n+1)^2\leq ((n+1)^2+2)/(2n+1)^2$ 
which is bounded independently on $n$.
\end{exa}

\begin{rem}\label{rema} 
The condition of $\chi-$completeness covers many situations that 
have been already studied. Particularily it is satisfied by locally finite graphs
which are complete for some intrinsic pseudo metric, as defined in \cite{HKMW}.
\end{rem}

\begin{lem}\label{intrinsic} If the graph admits an {\rm intrinsic path 
metric} $d$ such that $(\mcV,d)$ is complete, then the graph is 
$\chi-$complete. 
\end{lem}
\begin{proof}\tir The hypothesis means that our infinite, connected, locally finite, 
weighted graph admits a metric $a$ as defined in Section \ref{met} such that 
$$\forall x\in\mcV,\, \frac{1}{c(x)}\sum_{e\in\mcE,e^+=x}r(e)a(e)\leq 1
$$
(the relation between our notations and those of \cite{HKMW} is:
$\sigma^2=a$). We suppose also that the
metric distance $d_a$ defines  $(\mcV,d_a)$ as a complete metric space.

We then define the functions $\chi_n$ as follows. Fix $O$ a vertex in $\mcV$
and put
\begin{equation}\label{chin}
\forall n\in \N, \, B_n=\{x\in \mcV;\,d_a(O,x)\leq n\},\, 
\chi_n(x)=\sup\{(1-d_a(x,B_n)),0\}
\end{equation}
As pointed in \cite{HKMW} completeness of  $(\mcV,d_a)$ gives that the $B_n$
are finite. We verify that 
\begin{enumerate}
\item[  (i)] The support of $\chi_n$ is finite: it is included in 
$\{x;~d_a(x,B_n)\leq 1 \}\subset B_{n+1}.$
\item[ (ii)] $x\in B_n\Rightarrow d_a(x,B_n)=0\Rightarrow \chi_n(x)=1$

\item[(iii)] finally, by the triangle inequality, $d\chi_n(e)^2\leq a(e)$; then
the condition of {\it intrinsic metric} gives:
$$\forall x\in\mcV,\, \frac{1}{c(x)}\sum_{e\in\mcE,e^+=x}r(e)d\chi_n(e)^2\leq 1.
$$
\end{enumerate}

\end{proof}

\begin{rem}
If we consider the metric already introduced in \cite{CTT}  
(but to study non complete situations)
\begin{equation}\label{metric}
a(e)=\frac{\min(c(e^+),c(e^-))}{r(e)}
\end{equation} 
and with bounded valence:
$$\exists A>0,\, \forall v\in\mcV, \quad\sharp\{e\in\mcE,\, e^+=v\}\leq A.
$$
then, if the graph is complete for this metric, $\chi$-completeness is also 
satisfied. Indeed, $\dfrac{a}{A}$ is an intrinsic metric, because:
$$\forall x\in\mcV,\, \frac{1}{c(x)}\sum_{e\in\mcE,e^+=x}r(e)a(e)\leq A.
$$

It is  also the case in the situation of \cite{M} 
where the hypothesis taken give that 
$$\sup_{x\in B_n}\frac{1}{c(x)}\sum_{e,e^\pm=x}r(e)\di\chi_n(e)^2=o(1)
$$ 
for some
$\chi_n$ satisfying $\di\chi_n(e)^2=O(n^{-2}).$
\end{rem}
%%%%%%%%%%
\begin{thm}\label{esa}
Let K be a connected, locally finite graph. If K is $\chi$-complete, 
then the operator $D$ is essentially self-adjoint.
\end{thm}
\begin{proof}\tir First note that
\subsubsection*{(a) If $\di_{min}=\di_{max}$ and $\delta_{min}=\delta_{max}$ then $D$ is 
essentially self-adjoint.}
Indeed, $D$ is a direct sum and if $F=(f,\phi) \in\Dom(D^\ast)$ then 
$\phi\in\Dom(d^\ast)$ and $f\in\Dom(\delta^\ast)$ and then, by hypothesis, 
$\phi\in\Dom(\delta_{min})$ and $f\in\Dom(d_{min}),$ thus $F\in \Dom(\bar D).$

For the following we need some formulas taken in \cite{M}. First we set, for each $f \in C^0(K)$
\begin{equation}\label{moyenne}
  \tilde f(e)=\frac{1}{2}(f(e^+)+f(e^-)).
\end{equation}
The function $\tilde f$ is  even  on the edges. We have
\begin{align}\label{diprod}\forall f,g \in C^0(K),\, 
\forall e\in\mcE, \quad
\di(fg)(e)&=f(e^+)\di g(e)+\di f(e) g(e^-)\\
&=\tilde f(e)\di g(e)+\tilde g(e)\di f(e)\nonumber\\
\label{delprod}\forall f\in C^0(K),\phi\in C^1(K),\,
\forall v\in\mcV, \quad \delta(\tilde f \phi)(v)&=f(v)\delta\phi(v)-
\frac{1}{2c(v)}\sum_{e^+=v}r(e)\di f(e)\phi(e).
\end{align}
We prove now these two equalities.
\subsubsection*{(b) If $f\in\Dom(\di_{max})$ then $\|(f-\chi_nf)\|+\|\di(f-\chi_nf)\|\to 0$ when $n\to\infty$}
This will show that $\di_{min}=\di_{max}.$ 

Let $f\in\Dom(\di_{max}),$ we can then calculate
\begin{gather*}\|(f-\chi_nf)\|^2\leq\sum_{v\notin B_n}c(v)|f(v)|^2
\stackrel{n\to\infty}{\longrightarrow} 0
\end{gather*}
because $f\in L_2(\mcV)$.
For the second term, the relation \eref{diprod} gives
$$\di(f-\chi_nf)(e)=(1-\chi_n)(e^+)\di(f)(e)-f(e^+)\di(\chi_n)(e).
$$
Because of \eref{E2} (and with an abuse of notation),
\begin{gather*}
\lim_{n\to\infty}\|(1-\chi_n)(e^+)\di(f)(e)\|=0
\end{gather*}
On the other hand, 
\begin{align*}\|f(e^+)\di(\chi_n)(e)\|^2&=\sum_{e\in\mcE}r(e)|f(e^+)|^2
|\di(\chi_n)(e)|^2\\
&=\sum_{x\in\mcV}|f(x)|^2\sum_{e^+=x}r(e)|\di(\chi_n)(e)|^2\\
&\leq\sum_{x\in\mcV,\exists e\in\supp(d\chi_n),e^+=x}C c(x)|f(x)|^2
\end{align*}
by the hypothesis (iii).
The property \eref{E} permits to conclude that this term tends to 0
as $n\to\infty.$
\subsubsection*{(c) If $\phi\in\Dom(\delta_{max})$ then 
$\|(\phi-\tilde\chi_n\phi)\|+\|\delta(\phi-\tilde\chi_n\phi)\|\to 0$ when $n\to\infty$}
This will show that $\delta_{min}=\delta_{max}.$

Let $\phi\in\Dom(\delta_{max}),$ by the properties \eref{E0} and \eref{E} 
we know that
$$\forall p\in\N, \;\forall n\geq n_p, \quad \|\phi-\tilde\chi_n\phi\|^2\leq
\sum_{e\in\mcE_p^c}r(e)|\phi(e)|^2
$$
so 
$\displaystyle\lim_{n\to\infty}\|\phi-\tilde\chi_n\phi\|=0.$

On the other hand, by \eref{delprod}
\begin{align*}
\delta(\phi-\tilde\chi_n\phi)(v)
&=\delta\left(\wt{(1-\chi_n)}\phi\right)(v) \\
&= (1-\chi_n)(v)\delta\phi(v)+\frac{1}{2c(v)}\sum_{e^+=v}r(e)\di\chi_n (e)\phi(e)
\end{align*}
Clearly
$$\lim_{n\to\infty}\|(1-\chi_n)\delta\phi\|=0
$$
because $\delta\phi\in L_2(\mcV).$ For the second term, we use (iii)
and the Cauchy-Schwarz inequality:
\begin{align*} 
\forall v\in\mcV,\, \Big \vert\sum_{e^+=v}r(e)\di\chi_n (e)\phi(e)\Big \vert^2
&\leq\sum_{e^+=v}r(e)|\di\chi_n (e)|^2\sum_{e\in\supp(\di\chi_n),e^+=v}r(e)|\phi(e)|^2
\\
&\leq Cc(v)\sum_{e\in\supp(\di\chi_n),e^+=v}r(e)|\phi(e)|^2 \\
\hbox{so, }\sum_{v\in\mcV}c(v)\Big\vert\frac{1}{2c(v)}\sum_{e^+=v}r(e)\di\chi_n (e)
\phi(e)\Big\vert^2
&\leq C\sum_{v\in\mcV}\sum_{e\in\supp(\di\chi_n),e^+=v}r(e)|\phi(e)|^2\\
&\leq C \sum_{e\in\supp(\di\chi_n)}r(e)|\phi(e)|^2.
\end{align*}
This term tends to $0$ by properties \eref{E0} and \eref{E}.
\end{proof}
\begin{pro}\label{esa2}
Let K be a connected, locally finite graph. The operator $D$  is  essentially self-adjoint
if and only if the operator $\Delta$ is  essentially self-adjoint.
\end{pro}
%%%%%%%%%%%
\begin{proof}\tir If $D$ is essentially self-adjoint, then $\Im(D\pm i)$ are dense and $(\bar D\pm i)$ are invertible.
This is a result for essentially self-adjoint operators (Corollary of Theorem VIII.3 in \cite{RS}).
By the second property we know that  
\begin{equation}\label{resolv}
\exists C_2>0, \forall F\in\Dom(\bar D),\, \|F\|_{L_2}\leq C_2 \|(\bar D\pm i)(F)\|_{L_2}.
\end{equation}
Note also that 
$$D(C_0^0(K)\oplus C_0^1(K))\subset C_0^0(K)\oplus C_0^1(K).$$

Now, by the theorem of von Neumann, $(\bar D)^2=D^\ast \bar D$ is self-adjoint when $D^\ast=\bar D$ and it is an 
extension of $\Delta.$ As a consequence, the domain of $(\bar D)^2$ contains the domain of $\bar\Delta,$ the 
closure of $\Delta$. But 
$$\Dom(\bar\Delta)\subset\Dom((\bar D)^2)\Rightarrow\Dom((\bar D)^2)\subset \Dom(\Delta^\ast).
$$
In fact, we have also $\Dom(\Delta^\ast)\subset \Dom((\bar D)^2)$: let $\Psi\in\Dom(\Delta^\ast),$ then
$$\exists C_1>0, \,\forall F\in  C_0^0(K)\oplus C_0^1(K),~~ |\langle (\Delta+1)(F),\Psi\rangle |\leq C_1 \|F\|_{L_2}.
$$
We now consider  the linear form defined on $C_0^0(K)\oplus C_0^1(K)$, by 
$$G\longmapsto \left\langle(D-i)G,\Psi \right\rangle
$$ 

For all $G\in\Im(D+ i),$ $\exists F\in  C_0^0(K)\oplus C_0^1(K),$
such that $G=(D+i)(F)$ so $G\in C_0^0(K)\oplus C_0^1(K)$ and, using \eref{resolv}
$$|\langle (D-i)G,\Psi\rangle |=|\langle (\Delta+1)F,\Psi\rangle |\leq C_1 \|F\|_{L_2}\leq C_1 C_2\|G\|_{L_2}$$

Hence 
\begin{equation}
\exists C>0,\, \forall G\in \Im(D+ i),\, |\langle (D-i)G,\Psi\rangle |\leq C \|G\|_{L_2}.
\end{equation}

But $\Im(D+ i)$ is dense, it means that the considered linear form extends continuously on $L_2$ 
or that $(D+i)\Psi\in L_2.$
Thus $\Psi\in\Dom( \bar D)$ because $\bar D$ is self-adjoint. It is then clear that 
$D(\Psi)\in\Dom( \bar D)$:
$$\forall F\in  C_0^0(K)\oplus C_0^1(K),~~ |\langle D(F),D(\Psi)\rangle |=|\langle \Delta(F),\Psi\rangle |
\leq (C_1+\|\Psi\|_{L_2})\|F\|_{L_2}.
$$
So, we have proved
$$\Dom(\Delta^\ast)\subset\Dom((\bar D)^2)\Rightarrow\Dom((\bar D)^2)\subset\Dom(\bar\Delta)
$$
 because $\Delta^{\ast\ast}=\bar\Delta,$ and finally
$$\Dom(\bar\Delta)=\Dom((\bar D)^2)
$$
and then $\bar\Delta=(\bar D)^2$ is self-adjoint.

Let us now look at the converse: if $\Delta$ is essentially 
self-adjoint then, by the Corollary of Theorem VIII.3 in \cite{RS}, $\Im(\Delta+1)$ is dense but
$$\Delta+1=(D+i)(D-i)=(D-i)(D+i)\Rightarrow\Im(\Delta+1)\subset\Im(D\pm i).
$$
Thus $\Im(D\pm i)$ are both dense and $D$ is essentially self-adjoint.
\end{proof}
This proof essentially follows \cite{Ch}, it uses in a very significant way the fact that $D$ maps 
elements of finite support into themselves.
\begin{cor}\label{esa2}
Let K be a connected, locally finite graph. If K is $\chi$-complete, 
then the operator $\Delta$ is  essentially self-adjoint.
\end{cor}
%%%%%%%%%%%%%%%%%%%%

%%%%%%%%%%%
\begin{rem} \tir
The case studied in  \cite{T}, namely a complete graph for the metric 
 $$a(e)=\frac{\sqrt{c(e^+)c(e^-)}}{r(e)}
 $$ 
and with a valence bounded by $ A $ can be handled with the same kind of calculus,
although it is not clear that this metric is intrinsic. Indeed, with the same $\chi_n$
(defined by \eref{chin}), the bound now satisfied is
$$\exists C>0\; , ~\forall e\in\mcE ,~n\in\N ~,\quad  r(e)\di\chi_n(e)^2\leq C
\sqrt{c(e^+)c(e^-)}.
$$ 
We write
\begin{multline*}
\sum_{e\in\mcE}r(e)f(e^-)\di\chi_n(e)\bar\phi(e)
=\frac 12\sum_{e\in\mcE}r(e)(f(e^+)+f(e^-))\di\chi_n(e)\bar\phi(e)\\
\leq\frac 12\sqrt{\sum_{e\in\supp(\di\chi_n)}r(e)|\phi(e)|^2}
\sqrt{\sum_{e\in\supp(\di\chi_n)}r(e)|f(e^+)+f(e^-)|^2\di\chi_n(e)^2}
\end{multline*}
\begin{align*}\hbox{and } & \sum_{e\in\supp(\di\chi_n)}r(e)|f(e^+)+f(e^-)|^2\di\chi_n(e)^2\\
&=\sum_{e\in\supp(\di\chi_n)}r(e) \Big [|(f(e^+)-f(e^-)|^2+
4\Re \Big(f(e^+)\bar f(e^-)\Big) \Big]\di\chi_n(e)^2\\
&=\sum_{e\in\supp(\di\chi_n)}r(e)|\di(f)(e)|^2+4
\Re \Big(\sum_{x\in\mcV}f(x)\sum_{e^+=x}r(e) \bar f(e^-)\di\chi_n(e)^2\Big)
\end{align*}
the first term tends to $0$ by completeness and the second is bounded as follows
\begin{multline*}
\Re \Big(\sum_{x\in\mcV}|f(x)|\sum_{e^+=x}r(e)|f(e^-)|\di\chi_n(e)^2\Big)\\
\leq C\sum_{x\in\mcV}|f(x)|\sum_{e\in\supp\di\chi_n,e^+=x}|f(e^-)|\sqrt{c(e^+)c(e^-)}\\
\leq AC\sum_{x\in\mcV,\exists e\in\supp\di\chi_n,e^+=x}c(x)|f(x)|^2
\end{multline*}
because, as $\mcE$ is symmetric, one has
$$\sum_{x\in\mcV,\exists e\in\supp\di\chi_n,e^+=x}c(x)|f(x)|^2=
\sum_{x\in\mcV,\exists e\in\supp\di\chi_n,e^-=x}c(x)|f(x)|^2
$$
So the second term also tends to 0, because of completeness and bounded valence.
\end{rem}

%%%%%%%%%%%%%%%%%%%%%%%%%
\section{Flanders Theorem}
\subsection{Flanders problem}
In 1971, Flanders published a very nice result \cite{F} concerning resistive 
networks.
The problem is the following (keeping the notations of Flanders): Let $i$ be a finite current 
source, {\it i.e.} an element of $C_0^0(K)$, 
and $E'$ a finite voltage source,  {\it i.e.} an element of $C_0^1(K)$, 
\begin{center}
is there a resulting current flow, and is it unique? 
\end{center}
 {\it i.e.} find $L_2$-solutions $I$ of the problem (Kirchhoff's laws):\begin{equation}\label{kirchhoff}
  \left\{\begin{array}{ll}
\hbox{ (Kirchhoff's current law)} & \delta(I)+i =0, 
\cr  \hbox{ (Kirchhoff's voltage law)} & \forall Z,\, \partial Z=0,
\quad \displaystyle \int_Z E'=\int_Z I ,
  
\end{array}
\right.
\end{equation}

Here $Z$ is a cycle,  {\it i.e.}  a 1-chain (a formal finite sum of oriented edges)  with no boundary. 

Geometrically, if we  write $ Z=\sum_{e\in\mcE^+} z_e e,\,z_e\in\Z,$ the boundary 
$\partial$ of a 1-chain is an operator defined on the edges by $\partial(e)=e^+-e^-.$ 

On the sequel we will prefer a skew-symmetric notation: 
\begin{align}
Z&=\frac 12\sum_{e\in\mcE} z_e e,\,z_e\in\Z\hbox{, with } z_e=-z_{-e} \nonumber\\
\label{bord}\partial Z&=\sum_{x\in\mcV}(\sum_{e^+=x} z_e)x.
\end{align}
The integral in \eref{kirchhoff} has to be understood in the simplicial 
framework:
\begin{equation}\int_Z I=\frac 12\sum_{e\in\mcE}z_e I(e)
\end{equation}

Flanders studies this problem for an infinite graph with weight $c=1$ on
vertices (Remark that our weight on edges is in fact the inverse of the 
resistances $r$ introduced by Flanders, so our unknown $I$ corresponds to 
$r.I$ in the notations of Flanders).
He shows that this problem has a unique $L_2$-solution which is the limit of 
finite flows ({\it ie.} solutions on an increasing sequence of finite 
subgraphs) if $i$ has zero mean value $\displaystyle \sum_{v\in\mcV} i(v)=0.$

\subsection{Flanders type Theorem} \label{flanders}
In the framework we have introduced in Section 2,  this question is related 
to the question of the Hodge decomposition.
Indeed, the second condition tells us that the periods of $I$ are given by 
those of $E'$, this determine the harmonic component  of $I$, {\it i.e.} the 
orthogonal projection of $I$ on $\Ker(\delta)$, while the complementary component
must be sent by $\delta$ on $-i$. 
So we have to look for $I=E_0+I_0$ such that 
$E_0$ is the harmonic component of $E'$ and $I_0$ satisfies  $-i=\delta(I_0)$ 
and $\int_Z I_0=0$ on cycles.
\begin{rem}\label{minim} Indeed, this question is related with the {\em uniqueness problem} studied
very carefully in a lot of works, we refer to \cite{LP} for a precise presentation. 
It appears that, at least with finite source current and no voltage current, the situation mostly studied,
({\it i.e.} $i$ has finite support and $E'=0$), the two general solutions are the 
{\em free current} which is the solution proposed by Flanders, and the {\em wired current} which is 
the solution of {\em minimal energy} ({\it i.e.} of minimal $L_2$-norm). It is clear, with the previous
decomposition $I=E_0+I_0,$ that there exists at most one $I_0$ and the solution of
minimal energy is given by the choice of $E_0$ with minimal norm. When $E'=0$ this solution is $E_0=0.$ 
The {\em uniqueness problem} is to find conditions where there is no choice although for $E_0.$
We will not study this question here but focus on the existence question which concerns in fact $I_0.$  
\end{rem}
\begin{dfn} Any cycle $Z$ defines a unique 1-cochain $E_Z\in C_0^1(K)$ such that $\forall E\in L_2(\mcE)$
$$\int_Z E=\langle E,E_Z\rangle$$
and we have the formula
$$Z=\sum_{e\in\mcE^+} z_e e,\,z_e\in \Z\quad\Rightarrow 
E_Z=\sum_{e\in\mcE^+} \frac{z_e}{r_e}e^*.
$$
where the cochain $e^*$ is defined by $e^*(e)=1$ and $e^*(e')=0$ if 
$e'\not=\pm e.$ 

An $L_2$-cycle $Z$ is an (infinite) cycle such that $E_Z\in L_2(\mcE).$
\end{dfn}
\begin{lem} 
For any $L_2$-cycle $Z$ the 1-cochain $E_Z$ satisfies formally that $\delta E_Z=0.$ Hence
$$E_Z\in\Ker\delta_{\rm max}.
$$
\end{lem}
This result is a simple consequence of \eref{bord}.
\begin{lem}\label{period}
 For any $E\in L_2(\mcE)$ orthogonal to $\Ker\delta_{\rm max}$ and any $L_2$-cycle $Z$
$$\int_Z E=0.
$$
\end{lem}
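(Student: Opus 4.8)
The plan is to represent the cycle $Z$ by a $1$-form and to recognise the period $\int_Z E$ as a scalar product in $L_2(\mcE)$. Writing $Z=\frac12\sum_{e\in\mcE}z_e\,e$ with $z_e=-z_{-e}$ as in \eref{bord}, I would associate to $Z$ the form $\omega_Z$ defined by $\omega_Z(e)=z_e/r(e)$; since $r$ is even and $z_{-e}=-z_e$, the function $\omega_Z$ is odd, so $\omega_Z\in C^1(K)$. The natural reading of ``$Z$ is an $L_2$-cycle'' is precisely that $\omega_Z\in L_2(\mcE)$, i.e. $\frac12\sum_e z_e^2/r(e)<\infty$, and then by Cauchy--Schwarz the period is an absolutely convergent series equal to a scalar product:
$$\int_Z E=\frac12\sum_{e\in\mcE}z_e E(e)=\frac12\sum_{e\in\mcE}r(e)\,\omega_Z(e)E(e)=<\omega_Z,E>.$$

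The heart of the matter is to prove $\omega_Z\in\Ker\delta$, after which the conclusion is immediate. Formally \eref{del} gives $\delta(\omega_Z)(x)=\frac{1}{c(x)}\sum_{e,e^+=x}r(e)\omega_Z(e)=\frac{1}{c(x)}\sum_{e,e^+=x}z_e$, which is $\frac{1}{c(x)}$ times the coefficient of $x$ in $\partial Z$, hence $0$. To make this rigorous for a cycle with infinite support I would verify directly that $\omega_Z\in\Dom(\delta_{max})=\Dom(\di^\ast)$ with $\di^\ast\omega_Z=0$. For $f\in C_0^0(K)$ the form $\di f$ has finite support by local finiteness, so there is no convergence issue and
$$<\di f,\omega_Z>=\frac12\sum_{e\in\mcE}r(e)\bigl(f(e^+)-f(e^-)\bigr)\omega_Z(e)=\frac12\sum_{e\in\mcE}\bigl(f(e^+)-f(e^-)\bigr)z_e=\sum_{x\in\mcV}f(x)\sum_{e,e^+=x}z_e,$$
where $r(e)\omega_Z(e)=z_e$ and the symmetry of $\mcE$ together with $z_{-e}=-z_e$ was used to fold the two sums together, exactly as in the computation of $\delta$ leading to \eref{del}. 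Since $\partial Z=0$, the inner sum vanishes for every $x$, whence $<\di f,\omega_Z>=0=<f,0>$ for all $f\in C_0^0(K)$; this shows $\omega_Z\in\Dom(\di^\ast)$ and $\delta_{max}\omega_Z=0$.

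To finish, recall that $K$ is complete and homogeneous, so by Proposition~\ref{esa} the coboundary operator is essentially self-adjoint, $\delta_{min}=\delta_{max}$; thus $\Ker\delta$ is unambiguous and contains $\omega_Z$. Since $E$ is orthogonal to $\Ker\delta$, I obtain $\int_Z E=<\omega_Z,E>=0$, as claimed. The only steps that deserve real care are fixing the correct definition of an $L_2$-cycle and the passage from the pointwise identity $\delta\omega_Z=0$ to membership in $\Dom(\delta_{max})$ for cycles of infinite support; both are painless here because the functional $f\mapsto<\di f,\omega_Z>$ is identically zero, so no boundedness estimate beyond the trivial one is required. The one computation to watch is the sign bookkeeping produced by the symmetry of $\mcE$.
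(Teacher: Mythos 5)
Your proof is correct and follows essentially the same route as the paper: the form $\omega_Z$ with $\omega_Z(e)=z_e/r(e)$ is exactly the paper's $E_Z=\sum_{e\in\mcE^+}\frac{z_e}{r_e}e^*$, the identity $\int_Z E=\iprod{\omega_Z}{E}$ is the same, and the conclusion follows from $\omega_Z\in\Ker\delta$ (which the paper asserts directly from \eref{bord}) together with $E\perp\Ker\delta$. The only difference is that you spell out the verification that $\omega_Z\in\Dom(\delta_{max})$ with $\delta_{max}\omega_Z=0$ via the pairing against $C_0^0(K)$, a detail the paper leaves implicit.
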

\begin{proof} \tir
Indeed, for any $L_2-$cycle $Z$, $E_Z\in \Ker\delta_{\rm max}\subset L_2(\mcE)$ and 
$$\int_Z E=\langle E,E_Z\rangle=0.$$
\end{proof}
\begin{rem} The uniqueness problem is then related to sufficient conditions for $\Ker\delta_{\rm max}$ to be
generated by the $E_Z,$ a priori we could consider finite cycles, or $L_2$-cycles. If 
$\phi\in\Ker\delta_{\rm max}\subset L_2(\mcE)$
is orthogonal to any $E_Z,Z$ finite cycle, then there exists a function $f\in C^0(K)$ such that
$\phi=\di f$, so there exists a harmonic function with finite energy, but not necessarily $L_2.$
\end{rem}
\begin{thm}
Let $K$ be a connected, locally finite graph. We suppose that it is $\chi-$complete such that the 
operator $D$ defined on $C_0^0(K)\oplus C_0^1(K)$ is essentially self-adjoint. Then for any 
$i\in C_0^0(K)$ satisfying $\displaystyle \sum_{v\in\mcV}c(v) i(v)=0$ and for any $E'\in L_2(\mcE)$ there 
exists a unique solution of minimal energy $I\in \Dom(\bar\delta)$ of the problem:
\begin{equation}\label{kirchhoffL2}
\delta(I)+i=0, \hbox{ and $\forall Z,\, L_2-$cycle}\quad\int_Z E'=\int_Z I.
\end{equation}
\end{thm}
\begin{proof} \tir
By hypothesis, $\bar\delta=\delta_{\rm max}.$ The space $\Ker\bar\delta$ is closed in $L_2(\mcE)$, so any 
element $I\in L_2(\mcE)$ can be writen $I=E_0+I_0$ with $E_0\in\Ker\bar\delta$ and $I_0$ in its orthogonal 
complement. By \lref{period}, 
and Remark \ref{minim}, if we look for a solution of minimal energy, $E_0$ must be the orthogonal 
projection of $E'$ on $\mcZ\subset\Ker\bar\delta,$ the closure of the vector space generated by all 
the $E_Z, Z\; L_2\hbox{-cycle }$ and then, by definition of the $E_Z,$
$$\forall Z \,  L_2\hbox{-cycle } , ~ \int_Z E'=\int_Z E_0.
$$

Now, the existence of $I_0$ is related to the property of $-i$ to be in the range of $\bar\Delta.$
In the case where $i$ has finite support, we can do as follows: let $K_0$ be a finite connected 
subgraph of $K$ (see \ref{subgraph}, vertices of $K_0$ are vertices of $K$ and edges of $K_0$  are 
edges of $K$). We suppose that the support of $i$ is included in $K_0.$ 
Denote by $d_0$ the difference
operator of $K_0.$ The Laplacian $\Delta_0$ of $K_0$ is
self-adjoint and $\Im \Delta_0=\Ker \Delta_0^\perp.$ Thus, as $\Ker \Delta_0=\R$ 
consists of constant functions
$$\langle i,1\rangle =0\Rightarrow \exists f\in C^0(K_0), \; -i=\Delta_0(f).
$$
Let $\phi\in C^1_0(K)$ be  the extension of $\di _0 f$ by $0$ on the edges 
which don't belong to $K_0$.
This form is certainly different from $\di f$ but $\delta\phi=-i.$

We define now $I_0$ as the orthogonal projection of $\phi$ on the orthogonal complement
of $\Ker\bar\delta,$
it means that $I_0$ differs from $\phi$ by an element of $\Ker\bar\delta$ and that 
$I_0\in\Ker\bar\delta^\perp.$
Using \lref{period}, we conclude that:
$$ \delta I_0=-i \hbox{ and }\forall Z\, L_2\hbox{-cycle }, ~ \int_Z I_0=0.
$$
Thus $I_0+E_0$ is the solution of the problem with minimal energy.
\end{proof}
\begin{rem}\tir In the original theorem of Flanders, $E'$ has finite support, and we only 
take care of finite cycles, but the proof extends easily to $E'\in L_2(\mcE)$ if we 
consider only $L_2$-cycles. The question is how to extend it to more general $i$ or can we
characterize $\Im(\bar\delta)?$ If we can prove that $\Im(\bar\delta)$ is closed, then the 
Hodge decomposition applies (see \eref{hodge}) and the answer will be quite simple;  that 
is what we explore below.
\end{rem}
%%%%%%%%%%%%%%%%%%%%%
\subsection{Anghel's hypothesis}\label{anghel}
In \cite{A}, N. Anghel shows that a Dirac type operator $D$ defined on a 
complete manifold is Fredholm if and only if $D^2$ is {\it positive at 
infinity.}
 
\vspace{0.6cm}
Let us define the complementary of a subgraph of a graph.
\begin{dfn}
 For  a subgraph  $K_0$ of a graph $K$, we define
the \emph{complementary graph} $K_0^c=(\mcV^c,\mcE^c)$ as follows
$$\mcV^c=\mcV\smallsetminus\mcV_0,\quad \mcE^c=\{e\in\mcE\smallsetminus \mcE_0,~\partial(e)\subset\mcV^c\}.
$$
\end{dfn}
\begin{rem}
\begin{enumerate}
\item In particular boundary points of edges in $\mcE_0$ belong to $\mcV_0.$
\item As a consequence of the definition, $\mcE^c$ avoids the edges with one end in
$\mcV^c$ and one in $\mcV_0.$
\end{enumerate}
\end{rem}
Following \cite{KL}, we define the {\it boundary} of a subgraph $K_0$ to be 
its edge boundary~: 
$$\partial(K_0)=\mcE\smallsetminus(\mcE_0\cup\mcE^c).
$$

\begin{dfn}
 We say that a closed Dirac type operator $D$ is \emph{positive at infinity} if
there exists a finite connected subgraph $K_0=(\mcV_0,\mcE_0)$ of $K$ such that
\begin{equation}\label{posinf}
\exists C>0, \quad\forall (f,\phi)\in L_2(\mcV^c)\times L_2(\mcE^c) \cap\Dom(D), 
\quad\|(f,\phi)\|\leq C\|D(f,\phi)\|
\end{equation}
where $D(f,\phi)$ is in fact $D$ applied to the prolongations by 0 of $(f,\phi).$
\end{dfn}
(Remark that this definition gives rather positivity of $\Delta$.)
\begin{thm}
If the graph (connected and locally finite) is $\chi-$complete and 
if its Gau\ss-Bonnet operator 
$$D=\di+\delta
$$ 
(which is essentially self-adjoint) satisfies that $\bar D$ is positive at infinity, then $\Im(\bar D)$ is 
closed and, as a consequence,  the Hodge property holds~:
\begin{equation}\label{hodge}
L_2(\mcE)=\Ker \bar\delta\oplus\Im(\bar\di),\quad 
L_2(\mcV)=\Ker \bar\di\oplus\Im(\bar\delta).
\end{equation}
\end{thm}
\begin{proof}\tir
The condition \eref{posinf} implies that the closed
restriction operator $D^c$ of $\bar D$ on $K_0^c:$
$$D^c:\Dom(D^c)\subset L_2(\mcV^c)\times L_2(\mcE^c) \to L_2(\mcV^c)\times 
L_2(\mcE^c)
$$
is continuous (for the graph norm on $\Dom(D^c)$), injective and with closed
image. By the inversion theorem, there exists
$$P:L_2(\mcV^c)\times L_2(\mcE^c)\to \Dom(D^c)
$$
such that $P\circ D^c=\I ,\quad \hbox{and}\quad \I-D^c\circ P$ is the orthogonal projector
on the subspace $\Im(D^c)^\perp.$

Let now $\psi\in\overline{\Im(\bar D)}$. It means:
$$\exists \hbox{ a sequence }(\sigma_n)_{n\in \N} ~\hbox{in} ~\Dom(\bar D),\quad 
\sigma_n\in\Ker(\bar D)^\perp,\hbox{ and }\lim_{n\to\infty}\bar D(\sigma_n)=\psi.
$$
{\it The sequence $(\sigma_n)$ is bounded.} If not, $(\sigma_n)$ admits a subsequence whose norm tends 
to $+\infty,$ denoting this subsequence $(\sigma_n)$ again, we construct
$$\phi_n=\dfrac{\sigma_n}{\|\sigma_n\|}.
$$ 
It satisfies
$$\|\phi_n\|=1,~ \lim_{n\to\infty}\bar D(\phi_n)=0.
$$
Then the restriction of $\bar D(\phi_n)$ to $K_0^c$ also 
converge to $0$ in $L_2(\mcV^c)\times L_2(\mcE^c)$.\\
 But the set of vertices not in $\mcV^c$ and the set of edges 
not in $\mcE^c$ are finite. As $\phi_n$ is bounded, by passing to a subsequence, we can 
suppose that all their values in these finite sets converge, and by the 
same argument we can suppose that the value of $\phi_n$ on the vertices 
which are boundary points of edges in $\partial(K_0)$ converge. By local 
finiteness we conclude that $\bar D({\phi_n}_{|K_0^c})$ converges.

By \eref{posinf}, then also ${\phi_n}_{|K_0^c}$ converges, thus finally 
$\phi_n$ converges, let $\phi$ be the limit, it satisfies
$$\|\phi\|=1, ~\phi\in\Ker(\bar D)^\perp , ~\bar D(\phi)=0.
$$
Thre is a contradiction.

So we can suppose that $(\sigma_n)$ is bounded, then by the same kind of 
reasoning, we show that $(\sigma_n)_n$ admits a subsequence which converges, 
let $\sigma$ be this limit. As $\bar D$ is closed and $\bar D(\sigma_n)$
converges, then $\sigma\in\Dom(\bar D)$ and $\bar D(\sigma)=\psi.$ 
\end{proof}
We see that the reasoning is separated for $0-$forms and $1-$forms. This gives:
\begin{cor}
Let $K$ be a graph (connected and locally finite)  $\chi-$complete
so its Gau\ss-Bonnet operator $D=\di+\delta$ is essentially 
self-adjoint. If $\di$ satisfies the condition
\begin{equation}\label{posinfd}
\exists C>0, \quad\forall f\in L_2(\mcV^c) \cap\Dom(\bar \di), 
\quad\|f\|\leq C\|\bar \di f\|
\end{equation}
for the complementary of some finite graph, then $\Im \bar\di$ is closed and
$$L_2(\mcE)=\Ker \bar\delta\oplus\Im(\bar\di).
$$
And there exists a similar statement for $\delta.$
\end{cor}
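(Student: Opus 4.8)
The plan is to reduce everything to the single statement that $\Im(\di)$ is closed; once that is known, the decomposition \eref{hodge} is automatic. Indeed, since $D$ is essentially self-adjoint we have $\di_{min}=\di_{max}$, so $\di$ denotes a single densely defined closed operator whose adjoint is $\di^\ast=\delta_{max}=\delta$. The general orthogonal splitting attached to a closed densely defined operator gives $L_2(\mcE)=\Ker(\di^\ast)\oplus\overline{\Im(\di)}=\Ker\delta\oplus\overline{\Im(\di)}$, and this is precisely $L_2(\mcE)=\Ker\delta\oplus\Im(\di)$ as soon as $\Im(\di)$ is closed. So the whole content of the Corollary is the closedness of the range, and I would obtain it by running the proof of the preceding Proposition, keeping only the $0$-form block (the argument there is already decoupled between $0$- and $1$-forms); note that \eref{posinfd} is precisely \eref{posinf} applied to pairs $(f,0)$, so the restriction estimate used there is available.

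Concretely, let $\psi\in\overline{\Im(\di)}$ and choose $f_n\in\Dom(\di)\cap\Ker(\di)^\perp$ with $\di f_n\to\psi$. First I would show that $(f_n)$ is bounded: if not, set $g_n=f_n/\|f_n\|$, so that $\|g_n\|=1$ and $\di g_n\to 0$. The key simplification relative to the Proposition is that restriction to $\mcV^c$ commutes exactly with the difference operator on $\mcE^c$: every edge of $\mcE^c$ has both endpoints in $\mcV^c$, hence $\di(g_n\rest_{\mcV^c})(e)=\di g_n(e)$ for $e\in\mcE^c$, with no boundary correction. Thus $\di(g_n\rest_{\mcV^c})=(\di g_n)\rest_{\mcE^c}\to 0$ in $L_2(\mcE^c)$, and \eref{posinfd} forces $\|g_n\rest_{\mcV^c}\|\to 0$. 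Since $\mcV\smallsetminus\mcV^c=\mcV_0$ is finite, boundedness lets me extract a subsequence along which $g_n\rest_{\mcV_0}$ converges; combining the two pieces, $g_n\to g$ with $\|g\|=1$, while closedness of $\di$ gives $\di g=0$ and $g\in\Ker(\di)^\perp$, so $g\in\Ker(\di)\cap\Ker(\di)^\perp=\{0\}$, a contradiction.

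Once $(f_n)$ is bounded I would apply the same restriction estimate to the differences $f_n-f_m$: here $\di((f_n-f_m)\rest_{\mcV^c})=(\di f_n-\di f_m)\rest_{\mcE^c}\to 0$, so by \eref{posinfd} the sequence $(f_n\rest_{\mcV^c})$ is Cauchy in $L_2(\mcV^c)$, while $(f_n\rest_{\mcV_0})$ converges along a subsequence by finiteness of $\mcV_0$. Hence $f_n\to f$ in $L_2(\mcV)$, and closedness of $\di$ together with $\di f_n\to\psi$ yields $f\in\Dom(\di)$ and $\di f=\psi$, i.e. $\psi\in\Im(\di)$. This proves $\Im(\di)$ closed and, by the first paragraph, the decomposition $L_2(\mcE)=\Ker\delta\oplus\Im(\di)$.

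The companion statement for $\delta$ follows along the same lines, and the only genuine obstacle lies there: restriction does \emph{not} commute with $\delta$. For a vertex $x\in\mcV^c$ incident to the boundary $\partial(K_0)$, the values $\delta(\phi\rest_{\mcE^c})(x)$ and $(\delta\phi)(x)$ differ by the terms indexed by the edges of $\partial(K_0)$ with $e^+=x$. Since $K_0$ is finite and $K$ is locally finite these edges are finite in number, so on a bounded sequence their contributions may be assumed convergent after one further extraction; this is exactly the boundary-vertex extraction performed in the Proposition. With this finite-rank correction absorbed, the two steps above (boundedness, then Cauchy-ness of the restriction) carry over verbatim, giving closedness of $\Im(\delta)$ and the decomposition $L_2(\mcV)=\Ker\di\oplus\Im(\delta)$.
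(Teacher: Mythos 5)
Your proposal is correct and follows essentially the same route as the paper, which proves the Corollary simply by observing that the argument of the preceding Proposition decouples into its $0$-form and $1$-form blocks; you have merely written out that block-by-block argument in detail (including the standard splitting $L_2(\mcE)=\Ker(\di^\ast)\oplus\overline{\Im(\di)}$ and the useful remark that restriction to $\mcV^c$ commutes exactly with $\di$ on $\mcE^c$, so no boundary extraction is needed in the $\di$ case).
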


%%%%%%%%%%%%%%%%%%%%%%%%%%%%
\section{Examples} \label{examples}
It is clear that if $K$ possesses infinitely many cycles (as infinite ladders, 
or infinite grids), the condition of positivity at infinity will not be satisfied
because there will be elements in $\Ker\delta$ with support as far as we want.
A family of examples could be a graph with finite geometry: there exists a
finite subgraph $K_0$ such that $K_0^c$ is a disconnected (finite) union of  
branches.
\begin{figure}[ht]
\includegraphics*[height=6cm,width=8cm]{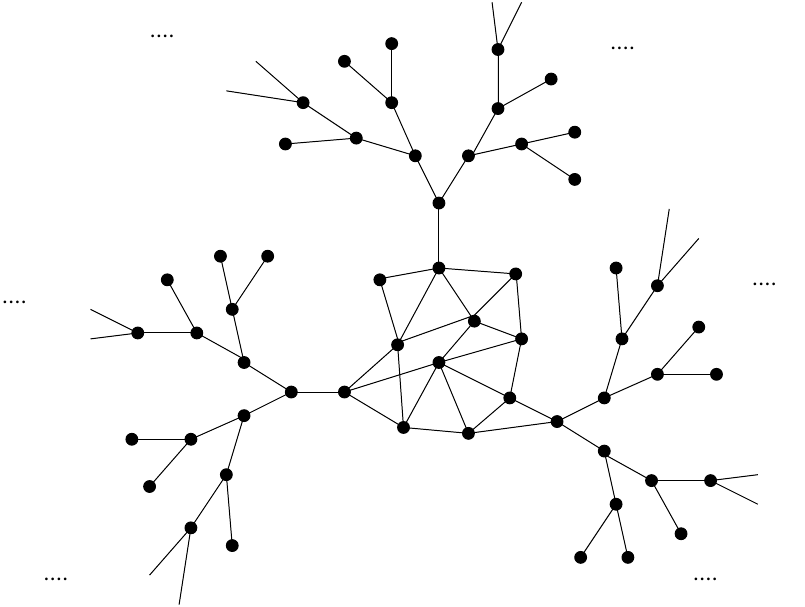}
\end{figure}

\begin{pro}\label{arbre}
If the connected graph  $K$ admits a finite subgraph 
such that its complementary is a finite union of trees with constant valence
larger than 3, 
then, considered with the weights constant equal to $1$ on vertex and edges, 
it is $\chi-$complete and $\Im\bar\di$ is closed.
\end{pro}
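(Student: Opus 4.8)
The plan is to derive both conclusions from the material already assembled: completeness and homogeneity from the \cite{CTT} discussion recalled in \sref{sa}, and the closedness of $\Im\di$ from the Corollary above, once the positivity condition \eref{posinfd} has been checked on the complement of the given finite subgraph $K_0$. The only genuine analytic content is then a uniform spectral gap for the function Laplacian on the tree-like ends of $K$.

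With $c\equiv 1$ and $r\equiv 1$ the metric \eref{metric} reduces to $a\equiv 1$, so $d_a$ is the combinatorial distance and distinct vertices lie at distance $\ge 1$; since $K$ is connected and locally finite the balls $B_n=\{y:d_a(x_0,y)\le n\}$ are finite and exhaust $\mcV$, and every Cauchy sequence is eventually constant, so $(\mcV,d_a)$ is complete. Because $K_0$ is finite and the trees have constant valence $d$, the valence of $K$ is globally bounded. Hence $K$ is of the type treated in the \cite{CTT} discussion of \sref{sa}: with $\chi_n(x)=\min(1,d_a(x,B_{n+1}^C))$ one gets $|\di\chi_n(e)|^2\le 1$, whence $\frac{1}{c(x)}\sum_{e,e^\pm=x}r(e)\di\chi_n(e)^2\le 2d$ uniformly. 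Thus $K$ is complete and homogeneous, and $D$ is essentially self-adjoint by \pref{esa}.

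By the Corollary it remains to establish \eref{posinfd}, i.e. a constant $C>0$ with $\|f\|\le C\|\di f\|$ for all $f\in L_2(\mcV^c)\cap\Dom(\di)$. Using \eref{delta} one has $\|\di f\|^2=\langle f,\Delta_0 f\rangle$, so this is the lower bound $\Delta_0\ge C^{-2}$ on $L_2(\mcV^c)$; as $\mcV^c$ is a disjoint union of finitely many trees it suffices to treat one component $T$. Since $f$ vanishes on $\mcV_0$, each edge of $\partial(K_0)$ issuing from $T$ contributes a term $f(v)^2$ to
$$\|\di f\|^2=\sum_{\{x,y\}}\big(f(x)-f(y)\big)^2,$$
the sum ranging over the edges of $K$ meeting $T$. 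Completing each such boundary edge to an edge of the $d$-regular tree $\widetilde T\supset T$ whose far endpoint carries the value $0$ (the valences match because every vertex of $T$ has $K$-valence $d$), this energy equals the free Dirichlet energy of $\widetilde T$ evaluated on the extension of $f$ by $0$. It therefore suffices to bound $\Delta_0=dI-A$ from below on the infinite $d$-regular tree, $A$ being the adjacency operator.

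Fix a root and let $\rho(x)$ be the distance to it. A vertex at distance $k\ge1$ has one neighbour at distance $k-1$ and $d-1$ at distance $k+1$, so the positive function $h(x)=(d-1)^{-\rho(x)/2}$ satisfies $Ah=2\sqrt{d-1}\,h$ away from the root, hence $\Delta_0 h=(\sqrt{d-1}-1)^2 h$. The ground-state substitution $f=hg$ (Barta's inequality) then gives
$$\inf\spec(\Delta_0)\ \ge\ \inf_x\frac{\Delta_0 h(x)}{h(x)}=(\sqrt{d-1}-1)^2>0,$$
the strict positivity being precisely where the hypothesis $d\ge 3$, i.e. $d-1\ge 2>1$, is used. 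Summing over the finitely many components yields \eref{posinfd} with $C=(\sqrt{d-1}-1)^{-1}$, and the Corollary gives that $\Im\di$ is closed. The main obstacle is exactly this tree estimate: one must justify the identification of the truncated energy on $T$ with the free energy on $\widetilde T$ (the role of constant valence) and the behaviour of $h$ at the root, the infimum in Barta's inequality being attained on the non-root vertices. By contrast, for valence $2$ the ends are half-lines, $\sqrt{d-1}=1$, the gap closes, and $\Im\di$ need not be closed, consistent with the observation that linear ends or infinitely many cycles destroy the conclusion.
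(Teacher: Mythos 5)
Your proof is correct in substance, but it reaches the key estimate by a genuinely different route than the paper. The first part (completeness and homogeneity for $c\equiv r\equiv 1$ via the \cite{CTT} criterion recalled in \sref{sa}, since $a\equiv 1$ makes $d_a$ the combinatorial distance and the valence of $K$ is globally bounded) is exactly what the paper leaves implicit, so spelling it out is welcome. For the positivity at infinity the paper argues combinatorially: on each tree end $U$ of valence $p+1$, $p\ge 2$, it proves by induction on $\# W$ (removing a vertex of maximal height) that $\#(\partial W)\ge \# W$, i.e.\ the isoperimetric constant is at least $1$, and then invokes the Cheeger-type inequality of Corollary 17 of \cite{KL} to obtain the spectral gap. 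You instead exhibit the explicit positive function $h(x)=(d-1)^{-\rho(x)/2}$ on the $d$-regular tree, verify $\Delta_0 h\ge(\sqrt{d-1}-1)^2h$ everywhere (including the root, where the inequality $d\le 2(d-1)$ is what saves you), and conclude by the ground-state transform (Barta). Your route is self-contained, needs no external citation, and produces the sharp constant $d-2\sqrt{d-1}$; the paper's route is purely combinatorial and avoids any eigenfunction computation, at the price of a non-optimal constant. Two points you should still nail down: (i) the identification of the truncated Dirichlet energy with the free energy on $\widetilde T$ presupposes that ``constant valence'' refers to the valence in $K$, so that $T$ together with its boundary edges (with the $\mcV_0$-endpoints split into distinct leaves carrying the value $0$) embeds in the $d$-regular tree; under the other reading, where each component of $K_0^c$ is itself the full $(p+1)$-regular tree, one instead simply discards the nonnegative boundary terms and applies the bound on $T$ directly --- either way the argument survives, but the reduction should be stated unambiguously; (ii) the inequality is first obtained for finitely supported $f$ and must then be extended to all of $L_2(\mcV^c)\cap\Dom(\di)$ by density in the graph norm, which is available precisely because the graph is complete homogeneous, so that $\di_{min}=\di_{max}$.
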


\begin{proof} \tir
We will prove that $\di$ is positive at infinity, {\it ie.} on each tree.
Let $U$ be a tree with a base point and valence $p+1,\,p\geq 2.$
We apply Corollary 17 of \cite{KL}, taking the notations of this paper
(in particular $\sharp $ denotes the cardinality):
in our case $D_U=p+1$ is finite, so it 
suffices to show that the isoperimetric constant $\alpha_U$ is positive.\\
Recall that 
\begin{equation}\label{constantiso}
\alpha_U=\inf_{W\subset U,\hbox{finite}}\frac{\sharp(\partial W)}{\sharp W}.
\end{equation}

For a tree, one has a notion of {\it height}: the base point is of height $0,$
and for another point its height is the necessary number of edges  to 
join it to the base point.

Let $W$ be a finite set of vertices of $U$, we shall show by reccurence
on $\sharp W$ that 
$${\sharp(\partial W)}\geq{\sharp W}.
$$
If $\sharp W=1,$ then $\sharp(\partial W)=p+1.$
If $\sharp W=n\geq 1,$ let $x\in W$ be a point of highest height in $W$ and
$y$ is the point just below.
Then define $W'=W-\{x\}$ so $\sharp W'=\sharp W-1$ and 
\begin{align*}
y\in W\Rightarrow\sharp(\partial W)=p-1+\sharp(\partial W')\\
y\notin W\Rightarrow\sharp(\partial W)=p+1+\sharp(\partial W')
\end{align*}
In all cases, applying the reccurence hypothesis, we get:
$$\sharp(\partial W)\geq p-1+\sharp(\partial W')\geq p-1+\sharp W-1\geq\sharp W.
$$
\end{proof}
%%%%%%%%%%%%%%%%%%
\begin{cor}
Such a graph (as in the proposition \ref{arbre}) satisfies
also that $\Im\bar\delta$ is closed and $\Ker\bar\di=\{0\}$ (because constants 
are not in $L_2$), so $\bar\delta$ is surjective. \\
As a consequence, for such a graph Flanders problem \eref{kirchhoff} has always a unique solution 
with minimal energy.
\end{cor}
\begin{proof} \tir
Indeed, if \eref{posinfd} is satisfied, then 
\begin{equation}\label{posinfDelta}
\forall f\in\Dom(\Delta^c)\subset L_2(\mcV^c),\quad \|f\|\leq C^2\|\Delta(f)\|.
\end{equation}
Thus, by the same reasoning as before the range of $\bar\Delta$ acting on functions
is closed. Now if $(\phi_n)_n$ is a sequence of 1-forms such that $\delta(\phi_n)$
converges, we can apply the Hodge decomposition \eref{hodge} at $\phi_n$, because of the 
  \pref{arbre}:
$$\exists f_n\in\Dom (\bar\di)\hbox{ such that }\delta\circ\di(f_n)\in L_2(\mcV) 
\hbox{ and converges.}
$$
But we can extract a subsequence of $(f_n)_n$ which converges, because of 
\eref{posinfDelta}.
\end{proof}
\begin{pro}\label{arbre2}
If the connected graph  $K$ admits a finite subgraph such that its complementary is a finite union of trees with  valencelarger than 3, then, considered with the weights equal to the valence on vertices and constant equal to $1$ on  edges, it is $\chi-$complete  and $\Im\bar\di$ is closed.
\end{pro}

\begin{proof} \tir It is clear that such a graph satisfies the condition of 
$\chi-$completeness. 
The fact that $\di$ is positive at infinity is again a  consequence of the 
results of \cite{KL}. Indeed, by hypothesis we have 
$\forall v\in\mcV, m(v)=\sharp\{e\in\mcE,e^+=v\}$ at least on the "tree-part", 
thus is it equal to the function $n$ introduced in \cite{KL} and their $d$ 
is constant equal to 1. By their 
Proposition 15, the quadratic form on a part $U$ is bounded from below by 
$1-\sqrt{1-\alpha_U^2}$ if 
$\alpha_U$ is the isoperimetric constant introduced in (\ref{constantiso}) 
 but now with the volumes $|.|$ defined by the weights: 
$$\alpha_U=\inf_{W\subset U,\hbox{finite}}\frac{|\partial W|}{|W|}. $$

Let $W$ be a finite part of a tree. Its number of (oriented) edges is 
$\displaystyle \sum_{v\in W}m(v)=|W|.$ 
But, because it is in a tree the number of interior edges is at most 
$2.\sharp (W)$. Thus 
$$\frac{|\partial W|}{|W|}\geq \frac{\sum_{v\in W}(m(v)-2)}{\sum_{v\in W}m(v)}
\geq \frac{1}{3}
$$
because $m(v)\geq 3.$\end{proof}
The same Corollary as before holds, for the same reasons.
\begin{cor} 
Such a graph (as in the Proposition \ref{arbre2}) satisfies
also that $\Im\bar\delta$ is closed and $\Ker\bar\di=\{0\}$ (because constants 
are not in $L_2$), so $\bar\delta$ is surjective. \\
As a consequence, for such a graph,  Flanders problem \eref{kirchhoff} has 
always a unique solution with minimal energy.
\end{cor}
\begin{rem}
Take care to the fact that in these situations $\Ker\bar\delta$
can be non trivial~:  on a tree of valence 3, with all the weights equal to 1,
fix a point $O,$ it has at least two edges which go to infinity: $(x,O)$ and
$(y,O).$ \\

\begin{figure}[ht]
\includegraphics*[height=6cm,width=9cm]{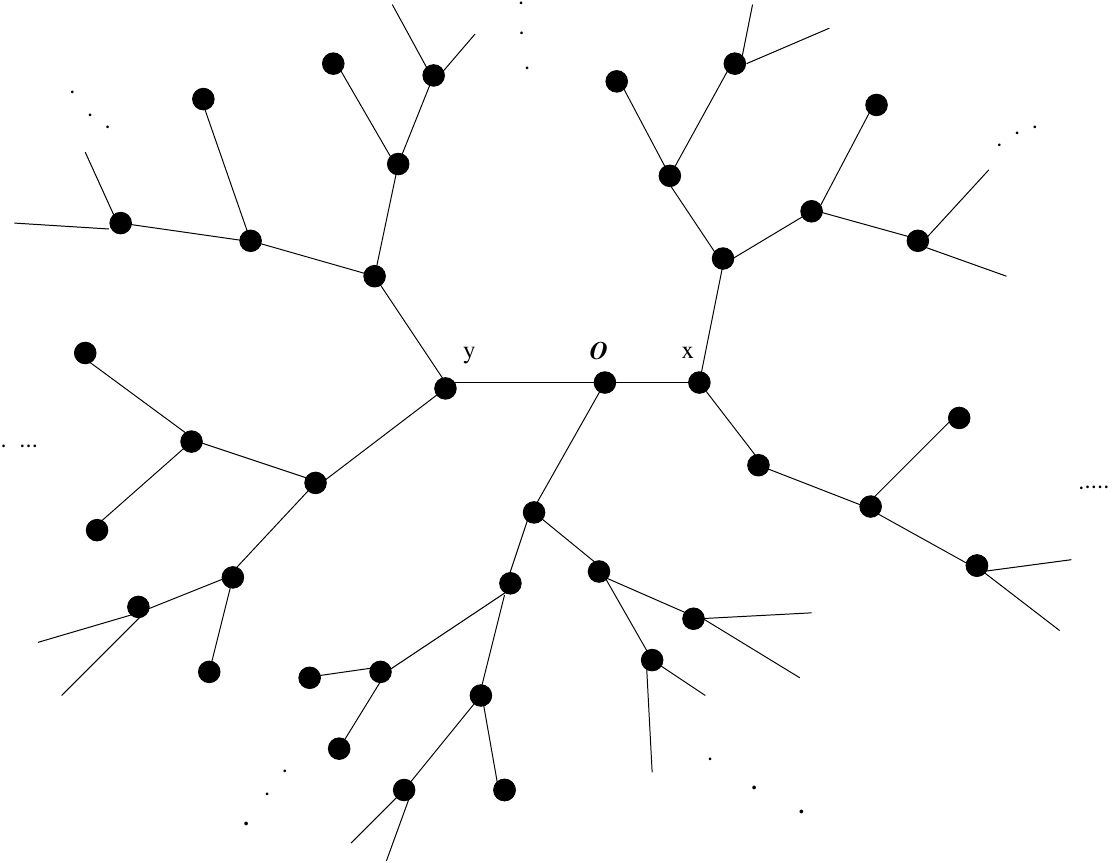}
\end{figure}

 Let $\phi$ be the form such that 
$$\phi(x,O)=1,\;\phi(y,O)=-1
$$
at the $n$-level on the branch emanating from $x$ we put the value
of $\phi$ to be $\dfrac{1}{2^n},$ and at the $n$-level on the branch 
emanating from $y$ we put the value of $\phi$ to be $\dfrac{-1}{2^n}.$
Elsewhere, we put $\phi(e)=0.$\\
It is easy to verify that such a $\phi$ is in $L_2$ and satisfies 
$\delta(\phi)=0,$ see also \cite{Ay}.
\end{rem}
\begin{rem}In these two last cases the Laplacian is bounded, and the non zero spectrum is bounded from  below because the isoperimetric constant $\alpha_U$ admits a bound independent on $U$.
\end{rem}

%%%%%%%%%%%%%%%%%%%%%%%%%%
\vspace{1cm}
{ \bf \textit{ Acknowledgements}} Part of this work was done while the author N.T-H  was visiting the University of Nantes. She would like to thank the Laboratoire de Math\'ematiques  Jean Leray  (LMJL) for its hospitality. She is greatly indebted to the research unity (UR / 13 Z S 47)  for its continuous support.\\
This work was supported by Grants through both  G\'eanpyl project (FR  2962 du CNRS Math\'ematiques des Pays de Loire) and PHC-Utique (13 G 15-01) "Graphes, g\'eom\'etrie et th\'eorie spectrale". 
\\ 
The authors thank Sylvain Golenia, Matthias Keller and Ognjen Milatovic for 
their reading with great interest and for their remarks. They would like to thank also 
the anonymous referee for their numerous relevant remarks and useful suggestions.

% %%%%%%%%%%%%%%%%%%%%%%%%%%%% Biblio %%%%%%%%%%%%%%%%%%%%%%%%%%%%

\end{document}